\documentclass[a4paper]{amsart}

\usepackage[british]{babel}

\usepackage{mlmodern}
\DeclareFontFamily{OMX}{mlmex}{}
\DeclareFontShape{OMX}{mlmex}{m}{n}{<->mlmex10}{}
\usepackage[T1]{fontenc}
\usepackage[babel,stretch=10,shrink=10,verbose=errors,selected]{microtype}
\usepackage[strict,english=british]{csquotes}

\usepackage{amsmath,amssymb,amsthm}
\usepackage{mathtools}

\usepackage[
	backend=biber,
	style=numeric-comp,
	sorting=nyt,
	sortcites=true,
	maxnames=5,
	giveninits=true,
	date=year,
	useprefix=true,
]{biblatex}
\DeclareFieldFormat{eid}{no.~#1}
\usepackage{enumitem}

\usepackage{xcolor}
\usepackage[nobiblatex]{xurl}
\usepackage[breaklinks=true,pdflang=en-GB,colorlinks=true]{hyperref}
\colorlet{citecolor}{green!75!black}
\colorlet{linkcolor}{red!75!black}
\colorlet{urlcolor}{blue!75!black}
\hypersetup{citecolor=citecolor,linkcolor=linkcolor,urlcolor=urlcolor}
\usepackage{zref-clever}
\zcsetup{cap,nameinlink=false}
\usepackage{keytheorems}

\usepackage{orcidlink}
\usepackage{ellipsis}

\newkeytheorem{theorem}[parent=section,style=plain]
\newkeytheorem{proposition}[sibling=theorem,style=plain]
\newkeytheorem{corollary}[sibling=theorem,style=plain]

\newkeytheorem{remark}[sibling=theorem,style=remark]

\newcommand{\set}[2]{\ensuremath{\{\,{#1}\mid{#2}\,\}}}
\newcommand{\ord}[1]{\ensuremath{\left|{#1}\right|}}
\newcommand{\ssetminus}{\!\setminus\!}

\newcommand{\NN}{\mathbb{N}}
\newcommand{\RR}{\mathbb{R}}

\DeclareMathOperator{\GL}{GL}
\DeclareMathOperator{\PGL}{PGL}
\DeclareMathOperator{\SL}{SL}
\DeclareMathOperator{\PSL}{PSL}

\title{Class numbers and nilpotent subgroups of \(\PGL(2,q)\)}
\author[S. Tertooy]{Sam Tertooy\ \orcidlink{0000-0002-5750-9153}}
\date{\today}
\address{KU Leuven, Kulak Kortrijk Campus\\
	E.~Sabbelaan 53\\
	8500 Kortrijk\\
	Belgium}
\email{\href{mailto:sam.tertooy@kuleuven.be}{sam.tertooy@kuleuven.be}}
\urladdr{\url{https://stertooy.github.io}}

\subjclass[2020]{Primary: 20D25; Secondary: 20E45}
\keywords{Conjugacy class, nilpotent subgroup, projective linear group}

\begin{document}

\begin{abstract}
We show that for certain odd prime powers \(q\), the number of conjugacy classes of \(\PGL(2,q)\) is greater than the order of its largest nilpotent subgroup. This answers negatively a question of Liebeck and Pyber.
\end{abstract}

\maketitle

\section{Introduction}

For a finite group \(G\), let \(k(G)\) denote its class number (i.e.\@ the number of conjugacy classes) and let \(n(G)\) denote the order of its largest nilpotent subgroup. In \cite{lp97-a}, Liebeck and Pyber prove the existence of a constant \(c < 58/21\) such that the inequality \(k(G) \leq n(G)^c\) holds for every finite group \(G\), and ask whether this holds with \(c = 1\). This is listed as question 14.54 in the Kourovka Notebook \cite{km26-a}.

There are many examples of finite groups for which \(k(G) = n(G)\) holds, including all finite abelian groups, all holomorphs \(C_p \rtimes C_{p-1}\) for \(p\) prime, and all projective special linear groups \(\PSL(2,q)\) with \(q\) even. By analysing their nilpotent subgroups, we prove that for \(G \coloneq \PGL(2,q)\) with \(q \neq 2^r + \epsilon\) for any \(r \in \NN\) and \(\epsilon \in \{-1,0,1\}\), \(k(G) = q+2 > q+1 = n(G)\). As a consequence, \(c \geq \log_{12} 13 > 1\). More examples of groups with \(k(G) > n(G)\) can then be constructed using these projective linear groups.

\section{\texorpdfstring{Nilpotent subgroups of \(\PGL(2,q)\)}{Nilpotent subgroups of PGL(2,q)}}

In this section, we determine \(n(\PGL(2,q))\)  for all prime powers \(q\). The subgroups of \(\PSL(2,q)\) have been classified in e.g.\@ \cite[Thm.~2.1]{king05-a}. For \(q\) even, \(\PGL(2,q)\) equals \(\PSL(2,q)\), so we already know its subgroups. For \(q\) odd, \(\PGL(2,q)\) contains \(\PSL(2,q)\) as an index 2 subgroup, and the former's subgroups can be obtained from the latter's. The resulting classification can be found in e.g.\@ \cite[Thm.~2]{cot06-a}. We summarise these results below.

\begin{theorem}
	Let \(q = p^n\) for \(p\) prime. The subgroups of \(\PGL(2,q)\) are:
\begin{enumerate}[label=(\roman*)]
\item Abelian groups:
\begin{enumerate}[label=(\alph*)]
\item \(C_2\),
\item \(C_d\), for every \(d > 2\) with \(d \mid q \pm 1\),
\item \((C_2)^2\), unless \(q = 2\),
\item \((C_p)^m\), for every \(m \leq n\).
\end{enumerate} 
\item Non-abelian dihedral groups:
\begin{enumerate}[label=(\alph*)]
\item \(D_{2p}\) (when \(p > 2\)),
\item \(D_{2d}\), for every \(d > 2\) with \(2d \mid q \pm 1\),
\item \(D_{2d}\), for every \(d > 2\) with \(d \mid q \pm 1\) and \((q \pm 1)/d\) odd.
\end{enumerate}
\item Certain non-abelian symmetric and alternating groups.
\item \(\PSL(2,p^m)\) and \(\PGL(2,p^m)\), for every \(m \mid n\).

\item Non-abelian semi-direct products \((C_p)^m \rtimes C_d\) for every \(m \leq n\) and \(d \geq 2\) with \(d \mid p^{\gcd(m,n)}- 1\).
\end{enumerate}
\end{theorem}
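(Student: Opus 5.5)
This classification is essentially a summary of known results, so I would not reprove it from scratch. Instead I would derive it from Dickson's classification of the subgroups of \(\PSL(2,q)\), as given in \cite[Thm.~2.1]{king05-a}, together with the index-2 embedding \(\PSL(2,q) \leq \PGL(2,q)\) for \(q\) odd, and cross-check against \cite[Thm.~2]{cot06-a}. For \(q\) even, \(\PGL(2,q) = \PSL(2,q)\). Here Dickson's list gives the cyclic groups \(C_d\) with \(d \mid q \pm 1\), the dihedral groups \(D_{2d}\) with \(d \mid q\pm1\), the elementary abelian \(2\)-groups \((C_2)^m\), the Borel-type groups \((C_2)^m \rtimes C_d\), and the subfield groups \(\PSL(2,2^m)\). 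The cases (ii)(b),(c) coincide for \(q\) even, since \(q\pm1\) is odd, so only \(d\mid q\pm1\) matters. The case \(q=2\) (where \(\PGL(2,2)\cong S_3\)) explains the exception in (i)(c).

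For \(q\) odd I would work directly with the geometry of \(\PGL(2,q)\) acting on the projective line. Every nontrivial element is one of three kinds:
\begin{itemize}
\item unipotent (order \(p\));
\item split semisimple, lying in a split torus \(C_{q-1}\);
\item non-split semisimple, lying in a non-split torus \(C_{q+1}\).
\end{itemize}
A subgroup \(H\) either
\begin{itemize}
\item fixes a point, and then lies in a Borel subgroup \(C_p^n \rtimes C_{q-1}\);
\item stabilises a pair of points (possibly conjugate over \(\mathbb{F}_{q^2}\)), and then lies in \(N(C_{q\pm1}) = D_{2(q\pm1)}\);
\item or acts irreducibly.
\end{itemize}
In the first case the subgroups are \((C_p)^m \rtimes C_d\), giving (i)(d) and (v), together with the cyclic subgroups of the torus. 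The condition \(d \mid p^{\gcd(m,n)}-1\) comes from requiring the \(\mathbb{F}_p\)-span of the unipotent subgroup to be stable under multiplication by the scalar \(\lambda\) with \(\lambda\) of order \(d\). Equivalently, \(\mathbb{F}_p[\lambda]\subseteq \mathbb{F}_{p^n}\) must act on an \(m\)-dimensional space, which forces \(\mathbb{F}_p(\lambda)\subseteq\mathbb{F}_{p^{\gcd(m,n)}}\). In the second case we get the cyclic and dihedral subgroups of \(D_{2(q\pm1)}\). The two dihedral families (ii)(b) and (ii)(c) come from the two conjugacy classes of reflections in \(D_{2(q\pm1)}\). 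Subgroups \(\langle r^{(q\pm1)/d}, s\rangle\) are of the stated types according to whether the index \((q\pm1)/d\) is even, in which case both reflection classes are available, or odd. The dihedral group \(D_{2p}\) in (ii)(a) sits in the Borel as \(C_p\rtimes C_2\), which is the case \(m=1\), \(d=2\) of (v), listed separately. In the irreducible case, the image in \(\PGL(2,q)/\PSL(2,q)\cong C_2\) reduces us to \(\PSL(2,q)\)-subgroups and their index-2 overgroups. Dickson then gives \(A_4, S_4, A_5\) (item (iii)) and \(\PSL(2,p^m)\) together with its extension \(\PGL(2,p^m)\) (item (iv)).

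The main obstacle is the bookkeeping in the dihedral and Borel cases: showing that exactly the listed parameters \(d\) occur, and no others. Concretely, this means identifying which overgroups of index 2 of \(\PSL(2,q)\)-subgroups exist in \(\PGL(2,q)\), which is where (ii)(c) with odd quotient arises. I would handle this by explicit matrix representatives in \(D_{2(q\pm1)}\) and the Borel subgroup. For the purposes of this paper only the abelian, dihedral and Borel families matter, since these contain the nilpotent subgroups, so I would state the remaining items (iii), (iv) by citation of \cite[Thm.~2]{cot06-a}.
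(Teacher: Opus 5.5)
Your proposal is correct and takes the paper's route: the paper gives no independent proof either. It takes the subgroups of \(\PSL(2,q)\) from \cite[Thm.~2.1]{king05-a}, uses \(\PGL(2,q)=\PSL(2,q)\) for \(q\) even and the index-\(2\) embedding for \(q\) odd, and cites \cite[Thm.~2]{cot06-a} for the resulting list, so your geometric sketch for \(q\) odd is elaboration rather than a different argument. One harmless slip: for \(q\) even, family (ii)(b) is empty (no \(2d\) divides the odd number \(q\pm1\)) rather than coinciding with (ii)(c), which alone supplies all \(D_{2d}\) with \(d>2\) and \(d\mid q\pm1\); likewise the groups \(A_4\) and \(A_5\), which you omit from Dickson's list for \(q\) even, are absorbed into families (v) and (iv).
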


Using this classification, we can state the following \zcref[nocap,noref]{prop:largestnilpPGL}.

\begin{proposition}
	\label{prop:largestnilpPGL}
	The largest nilpotent subgroup of \(G \coloneq \PGL(2,q)\) has order
	\[
	n(G) = \begin{cases}
		q + 1 &\text{ if } q \neq 2^r \pm 1, \\
		2^{r+1} &\text{ if } q = 2^r \pm 1,
	\end{cases}
	\]
	where \(r \geq 2\).
\end{proposition}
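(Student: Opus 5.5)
Using the classification theorem above, we go through the list and keep only the nilpotent subgroups. Cyclic groups \(C_d\) with \(d \mid q\pm1\) are abelian, so the largest is \(C_{q+1}\), of order \(q+1\). The elementary abelian \(p\)-groups \((C_p)^m\) have order at most \(q\), and \((C_2)^2\) has order \(4\). Symmetric and alternating groups are nilpotent only when tiny (\(S_3\) is not nilpotent, \(A_4\), \(S_4\), \(A_5\) are not), so they contribute at most \(C_2\) or \((C_2)^2\). \(\PSL(2,p^m)\) and \(\PGL(2,p^m)\) are nilpotent only for \(p^m=2\), where the group is \(S_3\), so they also drop out. For the semidirect products \((C_p)^m\rtimes C_d\) with \(d\ge2\) and \(d\mid p^{\gcd(m,n)}-1\): since \(p\nmid d\) and the action is non-trivial (they are non-abelian), the group is not nilpotent, because a nilpotent group is the direct product of its Sylow subgroups. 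Dihedral groups \(D_{2d}\) are nilpotent exactly when \(d\) is a power of \(2\). So the only competitors to \(q+1\) are dihedral \(2\)-groups \(D_{2^{s+1}}\).

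Next we determine the largest admissible \(2^s\). For \(q\) even, \(q\pm1\) is odd, so no \(d>2\) that is a power of \(2\) divides \(q\pm1\), and \(n(G)=q+1\). For \(q\) odd, we write \(2^a\) for the \(2\)-part of \(q-1\) and \(2^b\) for that of \(q+1\). One of \(a,b\) equals \(1\), and the other, \(t\), satisfies \(2^{t+1}\) being the \(2\)-part of \(q^2-1\). Clause (ii)(c) allows \(d=2^t\), since \((q\pm1)/2^t\) is odd, giving \(D_{2^{t+1}}\). Clause (ii)(b) gives only \(d=2^{t-1}\). Hence the largest nilpotent subgroup has order \(\max(q+1,2^{t+1})\) (or \(4\) if \(t=1\), which is impossible for odd \(q\geq 3\) since \(q^2-1\) is divisible by \(8\)). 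This is also the order of a Sylow \(2\)-subgroup of \(\PGL(2,q)\), whose order is \(q(q^2-1)\), which serves as a consistency check.

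Finally we compare the two quantities. Say \(q\pm1=2^t m\) with \(m\) odd. If \(m=1\), then \(q=2^t\mp1\) and \(2^{t+1}>q+1\), since \(2^{t+1}\ge 2^t+2\) for \(t\ge1\). So with \(r=t\geq2\) we get \(n(G)=2^{r+1}\). For \(q=3=2^2-1\) this gives \(8\), and indeed \(\PGL(2,3)\cong S_4\) has \(D_8\) as a Sylow \(2\)-subgroup. If \(m\ge3\), then \(2^{t+1}\le \tfrac{2}{3}(q+1)<q+1\), so \(n(G)=q+1\). This uses \(2^t\le (q+1)/3\), which holds in both cases \(q-1=2^tm\) and \(q+1=2^tm\).

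The main obstacle is the careful reading of the dihedral clauses (ii)(b) and (ii)(c). We have to make sure that \(D_{2^{t+1}}\) is really attained via (c), so that \(2^{r+1}\) is achieved and not just \(2^r\), and that no larger nilpotent group is hidden in clauses (iii) to (v). For (iii) we would simply cite that the relevant groups are \(A_4,S_4,A_5\), none of which is nilpotent.
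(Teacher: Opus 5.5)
Your argument is correct and is essentially the paper's proof: you discard families (iii)--(v) as non-nilpotent, keep $C_{q+1}$ and the dihedral $2$-groups, and compare $q+1$ with $2^{t+1}$, using $2^{t+1}\le\frac{2}{3}(q+1)$ when neither $q-1$ nor $q+1$ is a power of $2$; your explicit check that clause (ii)(c) really gives $D_{2^{t+1}}$ is a useful extra detail that the paper leaves implicit. There is one wording slip: family (iv) is never nilpotent, since for $p^m=2$ the group is $S_3$ and for $p^m=3$ it is $A_4$ or $S_4$, so it is not nilpotent ``only for $p^m=2$'', though your conclusion that it drops out is what the proof needs.
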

\begin{proof}
Family (i) consists entirely of abelian groups. The largest group in this family is \(C_{q+1}\), and in the case \(q = 3\) it is joint largest with \((C_2)^2\). In any case, the largest order is always \(q+1\).

For family (ii), recall that a dihedral group is nilpotent if and only if its order is a power of \(2\). Thus, the largest nilpotent dihedral subgroup has order \(2d\), where \(d\) is the largest power of \(2\) dividing \(q \pm 1\). If \(q = 2^r \pm 1\) this subgroup has order \(2(q\mp1) = 2^{r+1}\), otherwise, it has order at most \(\frac{2}{3}(q+1)\).

Families (iii), (iv) and (v) never contain nilpotent groups. For families (iii) and (iv) this is easily checked. For family (v), recall that a finite group is nilpotent if and only if it is a direct product of its Sylow subgroups. But here \((C_p)^m\) and \(C_d\) do not commute.
\end{proof}

\section{\texorpdfstring{Groups with \(k(G) > n(G)\)}{Groups with k(G) > n(G)}}

In \cite[Tbl.~3]{macd81-a}, Macdonald shows that for \(G = \PGL(2,q)\), one has \(k(G) = q + \gcd(2,q-1)\). Combining this with \zcref{prop:largestnilpPGL} we may conclude the following:
\begin{theorem}
\label{thm:mainthm}
Let \(G = \PGL(2,q)\).
\begin{enumerate}
\item If \(q = 2^r\) with \(r \geq 1\), then \(k(G) = q+1 = n(G)\). 
\item If \(q = 2^r \pm 1\) with \(r \geq 2\), then \(k(G) = q+2  < 2^{r+1} = n(G)\).
\item If \(q \neq 2^r \pm \epsilon\) with \(\epsilon \in \{-1,0,1\}\), then \(k(G) = q+2  > q+1 = n(G)\).
\end{enumerate}
Thus \(k(G) > n(G)\) if and only if \(q \neq 2^r + \epsilon\) for any \(r \in \NN\) and \(\epsilon \in \{-1,0,1\}\).
\end{theorem}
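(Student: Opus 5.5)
The plan is to combine Macdonald's formula \(k(\PGL(2,q)) = q + \gcd(2,q-1)\) from \cite[Tbl.~3]{macd81-a} with the case distinction of \zcref{prop:largestnilpPGL}, and then compare the two numbers case by case. Before doing so, I will check the range of \(r\) in which \zcref{prop:largestnilpPGL} applies. It is stated for \(r \geq 2\), so the small values \(q \in \{2,3,5\}\) must be handled consistently. Here \(q=2\) is \(2^1\), \(q=3 = 2^2-1\) and \(q=5=2^2+1\), so all three are covered by parts (1) and (2) of the statement. The one subtle point is that \(q = 3 = 2^1+1\) could also be read with \(r=1\); using \(r=2\) resolves this.

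For part (1), take \(q = 2^r\). Then \(\gcd(2,q-1)=1\), so \(k(G) = q+1\). Since \(q\) is even it is not of the form \(2^s \pm 1\) with \(s\geq 2\), so \zcref{prop:largestnilpPGL} gives \(n(G) = q+1\), and the two are equal. For part (2), take \(q = 2^r \pm 1\) with \(r \geq 2\). Then \(q\) is odd, so \(k(G) = q+2\), while \(n(G) = 2^{r+1}\) by \zcref{prop:largestnilpPGL}. The inequality \(q + 2 < 2^{r+1}\) follows from \(q+2 \le 2^r+3 < 2^{r+1}\) once \(2^r > 3\), that is, \(r\geq 2\). For part (3), \(q\) is not a power of \(2\), so it is an odd prime power and \(k(G)=q+2\). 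Since \(q\) is also not of the form \(2^r\pm1\), \zcref{prop:largestnilpPGL} gives \(n(G)=q+1<q+2\).

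For the final \enquote{if and only if}, I will observe that every prime power \(q\) falls into exactly one of the three cases. The only overlaps to rule out are \(2^r = 2^s \pm 1\), which is impossible for \(r \geq 1\), \(s\ge 2\) by parity, and the degenerate forms \(2^0+1=2\), \(2^1+1=3\) and \(2^1-1=1\). These coincide with \(q\) already in cases (1) and (2) or are not prime powers. Hence \(k(G)>n(G)\) holds exactly in case (3).

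The main obstacle is purely bookkeeping: reconciling \enquote{\(r\in\NN\)} in the final sentence with \enquote{\(r\geq 2\)} in \zcref{prop:largestnilpPGL} for the small cases \(q=2,3\). Beyond that the argument is a direct comparison.
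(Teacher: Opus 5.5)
Your proposal is correct and is essentially the paper's argument: the paper simply combines Macdonald's formula \(k(\PGL(2,q)) = q+\gcd(2,q-1)\) with \zcref{prop:largestnilpPGL}. Your case-by-case comparison makes explicit the arithmetic the paper leaves to the reader, namely the inequality \(q+2 \le 2^r+3 < 2^{r+1}\) for \(r\ge 2\) and the handling of the small values \(q\in\{2,3,5\}\) in the final equivalence.
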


\begin{corollary}
Let \(c \in \RR \) be such that for every finite group \(G\), \(k(G) \leq n(G)^c\). Then \(c \geq \log_{12} 13 > 1\).
\end{corollary}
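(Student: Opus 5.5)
The plan is to exhibit one explicit group from \zcref{thm:mainthm}(3) and then read off the bound. Take \(q = 11\). This is an odd prime, and it is not of the form \(2^r + \epsilon\) with \(\epsilon \in \{-1,0,1\}\): the nearby powers of two are \(8\) and \(16\), which give only \(7, 8, 9, 15, 16, 17\). So for \(G = \PGL(2,11)\), \zcref{thm:mainthm} gives \(k(G) = q+2 = 13\) and \(n(G) = q+1 = 12\).

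The hypothesis \(k(G) \leq n(G)^c\) for this \(G\) reads \(13 \leq 12^c\). Since \(\log_{12}\) is increasing on the positive reals, this is equivalent to \(c \geq \log_{12} 13\). Because \(13 > 12\), we have \(\log_{12} 13 > \log_{12} 12 = 1\), which gives the strict inequality.

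The only real obstacle is choosing \(q\) so that the ratio \((q+2)/(q+1)\), measured as an exponent in base \(q+1\), gives exactly the constant in the statement. To see that no smaller admissible \(q\) exists, I would check \(q = 3, 5, 7, 9\). Each of these is \(2^r \pm 1\), so it is excluded by \zcref{thm:mainthm}(2). Every even \(q\) is a power of \(2\), so it is excluded by \zcref{thm:mainthm}(1). Hence \(11\) is the smallest admissible \(q\). More importantly, the function \(\log_{q+1}(q+2)\) decreases in \(q\), so \(q = 11\) yields the strongest bound obtainable from this family. This justifies the constant \(\log_{12} 13\), although only the single instance \(q = 11\) is actually needed for the proof.
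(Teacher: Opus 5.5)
Your proof is correct and is essentially the paper's own argument: apply \zcref{thm:mainthm}(3) to \(\PGL(2,11)\), where \(k = 13\) and \(n = 12\), so \(13 \leq 12^c\) forces \(c \geq \log_{12} 13 > 1\). Your extra remarks are also correct but not needed for the statement: \(q = 11\) is the smallest admissible value, and \(\log_{q+1}(q+2)\) decreases in \(q\), so this is the best constant obtainable from this family.
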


The two smallest groups for which \(k(G) > n(G)\) are \(\PGL(2,11)\) (order \(1320\), class number \(13\)) and \(\PGL(2,13)\) (order \(2184\), class number \(15\)).
Using the \textsc{GAP} \cite{gap26-a} packages \textsc{SmallGrp} \cite{beo24-a}, \textsc{GrpConst} \cite{be24-a} and \textsc{SmallClassNr} \cite{tert26-c}, we verified that no other examples exist among the groups of order \(\ord{G} \leq 2303\), nor among the groups of class number \(k(G) \leq 14\).


More examples can be created from these projective linear groups. A first way to do so is by using direct products, since both \(k(.)\) and \(n(.)\) behave multiplicatively in this case. Taking the direct product of any finite group with sufficiently many copies of \(\PGL(2,q)\) (\(q \neq 2^r \pm \epsilon\)) will result in a group \(G\) satisfying \(k(G) > n(G)\). For example, \(H \coloneq \SL(2,3)\) has \(k(H) = 7 < 8 = n(H)\), but the direct product \(G \coloneq H \times \left(\PGL(2,11)\right)^2\) has \(k(G) = 1183 > 1152 = n(G)\).

A second way to construct new examples is by taking subdirect products, making use of the fact that \(\PGL(2,q)\) with \(q\) odd contains \(\PSL(2,q)\) as an index \(2\) (hence normal) subgroup.

\begin{corollary}
Let \(G\) be a subdirect product \(P \times_{C_2} A\) where \(P \coloneq \PGL(2,q)\) with \(q\) odd and \(A\) is an abelian group of even order. Then \(k(G) > n(G)\) if and only if \(q \neq 2^r \pm 1\).
\end{corollary}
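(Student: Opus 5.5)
We compute both invariants of \(G\) explicitly and show that each equals the corresponding invariant of \(P\) multiplied by \(|A|/2\). The comparison then reduces to \zcref{thm:mainthm}.

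\textbf{Setup.} Let \(S \coloneq \PSL(2,q)\) and let \(\varphi\colon P \to C_2\) be the quotient map with kernel \(S\). Let \(\psi\colon A \to C_2\) be a surjection with kernel \(B\), so that \(|B| = |A|/2\), and write
\[
G = \set{(x,a) \in P \times A}{\varphi(x) = \psi(a)}.
\]
Then \(|G| = |P|\,|A|/2\).

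\textbf{Class number.} Two elements \((x,a)\) and \((y,b)\) of \(G\) are conjugate in \(G\) if and only if \(a = b\) and \(x\) is conjugate to \(y\) in \(P\).
\begin{itemize}
\item Since \(A\) is abelian, conjugation fixes the second coordinate.
\item For every \(g \in P\) there is some \(c \in A\) with \(\psi(c) = \varphi(g)\), because \(\psi\) is surjective. Hence \((g,c) \in G\), and its conjugation action on the first coordinate is exactly that of \(g\). So full \(P\)-conjugacy is available.
\end{itemize}
Therefore the classes of \(G\) correspond bijectively to pairs \((C,a)\), where \(C\) is a \(P\)-class, \(a \in A\), and \(\varphi(C) = \psi(a)\). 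Each \(P\)-class pairs with exactly \(|B| = |A|/2\) admissible elements \(a\), whether \(C\) lies in \(S\) or not. Using Macdonald's formula for odd \(q\), this gives
\[
k(G) = k(P)\,|A|/2 = (q+2)\,|A|/2.
\]

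\textbf{Largest nilpotent subgroup.} Let \(N \leq G\) be nilpotent and let \(M\) be its image under the projection to \(P\). Then \(M\) is nilpotent, and
\[
N \subseteq \set{(x,a)}{x \in M,\ \psi(a) = \varphi(x)}.
\]
This set has exactly \(|M|\,|B|\) elements, since each \(x\) has \(|B|\) admissible partners \(a\). Hence \(|N| \leq n(P)\,|A|/2\).

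Conversely, take \(M \leq P\) nilpotent with \(|M| = n(P)\). The set \((M \times A) \cap G\) is a subgroup of the nilpotent group \(M \times A\), so it is nilpotent, and by the same count it has order \(|M|\,|A|/2\). Thus
\[
n(G) = n(P)\,|A|/2.
\]

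\textbf{Conclusion.} It follows that \(k(G) > n(G)\) if and only if \(q+2 > n(P)\). By \zcref{prop:largestnilpPGL}, for odd \(q\) we have \(n(P) = q+1\) unless \(q = 2^r \pm 1\) with \(r \geq 2\), in which case \(n(P) = 2^{r+1} > q+2\). The case \(\epsilon = 0\) cannot occur because \(q\) is odd. This is exactly the dichotomy of \zcref{thm:mainthm} for odd \(q\).

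The main point needing care is the class count: one must verify that conjugation in \(G\) realises all of \(P\)-conjugacy, which holds because \(\psi\) is surjective. One must also check that the count \(|A|/2\) of admissible second coordinates is the same over \(S\) and over \(P \setminus S\).
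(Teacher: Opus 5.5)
Your proof is correct and takes essentially the same route as the paper. Both show $k(G)=k(P)\,|A|/2$, because $G$-conjugacy agrees with conjugacy in $P\times A$, and $n(G)=n(P)\,|A|/2$, using the subgroup $G\cap(M\times A)$; then both apply \zcref{thm:mainthm}. Your projection argument for the upper bound on nilpotent subgroups of $G$ justifies a maximality claim that the paper simply asserts.
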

\begin{proof}
Let \(\pi_1\colon P \to C_2\) and \(\pi_2\colon A \to C_2\) be the epimorphisms such that
\[ G = \set{(x,y) \in P \times A}{\pi_1(x) = \pi_2(y)},\]
and set \(K_i \coloneq \ker(\pi_i)\). If \(N\) is the largest nilpotent subgroup in \(\PGL(2,q)\), then \(N \times A\) is the largest nilpotent subgroup of \(P \times A\) and \(G \cap (N \times A)\) is the largest nilpotent subgroup of \(G\). Setting \(M \coloneq N \cap K_1\), we see that
\[G \cap (N \times A) = (M \times K_2) \sqcup (N \ssetminus M \times A \ssetminus K_2),\]
so this subgroup has order \(\ord{N}\!\ord{A}/2\).

For any subset \(X \subseteq P\), let \(k_P(X)\) denote the number of \(P\)-conjugacy classes contained in \(X\). Since elements of \(G\) are conjugate in \(G\) if and only if they are conjugate in \(P \times A\), we can similarly conclude that
\[ k(G) = k_{P \times A}(G) = k_P(K_1) \ord{K_2} + k_P(P \ssetminus K_1) \ord{A \ssetminus K_2} = k(P) \ord{A} / 2.\]
In conclusion, \(n(G) = n(P)\ord{A}/2\) and \(k(G) = k(P)\ord{A}/2\). The result now follows immediately from \zcref{thm:mainthm}.
\end{proof}

\begin{remark}
For \(G \coloneq \GL(2,q)\), it is known \cite[Tbl.~1]{macd81-a} that \(k(G) = q^2 - 1\). Moreover, since \(\PGL(2,q)\) is the quotient of \(\GL(2,q)\) by its centre (which has order \(q-1\)), the largest nilpotent subgroup of \(\GL(2,q)\) is the preimage of the largest nilpotent subgroup of \(\PGL(2,q)\) under the quotient map. Thus \(n(G) = q^2 - 1 = k(G)\) if and only if \(q \neq 2^r \pm 1\).
\end{remark}

\printbibliography

\end{document}